\documentclass{article}

\usepackage{amsfonts}
\usepackage{amsmath}
\usepackage{amsthm}

\newcommand{\R}{\mathbb{R}}

\DeclareMathOperator{\dom}{dom}

\newtheorem{proposition}{Proposition}

\newtheorem*{theorem*}{Theorem}
\theoremstyle{definition}

\theoremstyle{remark}

\title{A primal representation of the Monge-Kantorovich norm}

\author{
  D\'avid Terj\'ek \\
  Alfr\'ed R\'enyi Institute of Mathematics\\
  Budapest, Hungary\\
  \texttt{dterjek@renyi.hu} \\
}

\begin{document}

\maketitle

\begin{abstract}
In this note, following \cite{Chitescuetal2014}, we show that the Monge-Kantorovich norm on the vector space of countably additive measures on a compact metric space has a primal representation analogous to the Hanin norm, meaning that similarly to the Hanin norm, the Monge-Kantorovich norm can be seen as an extension of the Kantorovich-Rubinstein norm from the vector subspace of zero-charge measures, implying a number of novel results, such as the equivalence of the Monge-Kantorovich and Hanin norms.
\end{abstract}

As presented in \cite{Chitescuetal2014} and summarized in \cite{Cobzasetal2019}, the vector space $cabv(X)$ of countably additive measures with bounded variation on a compact metric space $(X,d)$ can be normed in at least two distinct ways to induce the topology of weak convergence of measures (also called weak-$*$ convergence of measures) on subsets bounded with respect to the total variation norm. One $(cabv(X),\Vert . \Vert_{MK})$ is by the Monge-Kantorovich norm (also called the Kantorovich-Rubinstein norm)
\begin{equation}
\Vert \mu \Vert_{MK} = \sup_{f \in Lip(X), \Vert f \Vert_{sum} \leq 1}\left\{\int f d\mu \right\},
\end{equation}
with $(Lip(X),\Vert . \Vert_{sum})$ being the vector space of functions $f : X \to \R$ for which the Lipschitz seminorm
\begin{equation}
\Vert f \Vert_L = \sup_{x, y \in X, x \neq y}{\left\{\frac{\vert f(x) - f(y) \vert}{d(x, y)}\right\}}
\end{equation}
is finite, normed by the sum norm, which is defined as
\begin{equation}
\Vert f \Vert_{sum} = \Vert f \Vert_L + \Vert f \Vert_\infty.
\end{equation}
The other $(cabv(X),\Vert . \Vert_H)$ is by the Hanin norm
\begin{equation}
\Vert \mu \Vert_H = \inf_{\nu \in cabv(X,0)}\left\{ \Vert \nu \Vert_{KR} + \Vert \mu - \nu \Vert_{TV} \right\},
\end{equation}
where $\Vert . \Vert_{TV}$ is the total variation norm, and $(cabv(X,0),\Vert . \Vert_{KR})$ is the vector subspace $cabv(X,0)=\{\mu\in cabv(X) : \mu(X)=0\}$ of zero-charge measures normed by the Kantorovich-Rubinstein norm (also called the modified Kantorovich-Rubinstein norm)
\begin{equation}
\Vert \mu \Vert_{KR} = \sup_{f \in Lip(X,x_0), \Vert f \Vert_L \leq 1}\left\{\int f d\mu \right\},
\end{equation}
with $(Lip(X,x_0),\Vert . \Vert_L)$ being the vector subspace $Lip(X,x_0) = \{f\in Lip(X) : f(x_0)=0\}$ of Lipschitz continuous functions vanishing at an arbitrary fixed point $x_0 \in X$, normed by the Lipschitz seminorm.

The topological dual of $(cabv(X,0),\Vert . \Vert_{KR})$ and $(Lip(X,x_0),\Vert . \Vert_L)$ are isometrically isomorphic. The topological dual of $(cabv(X),\Vert . \Vert_H)$ and $(Lip(X),\Vert . \Vert_{max})$ are isometrically isomorphic as well, with the max norm defined as
\begin{equation}
\Vert f \Vert_{max} = \max\{ \Vert f \Vert_L , \Vert f \Vert_\infty \},
\end{equation}
leading to the dual representation
\begin{equation}
\Vert \mu \Vert_H = \sup_{f \in Lip(X), \Vert f \Vert_{max} \leq 1}\left\{\int f d\mu \right\}.
\end{equation}

Inspired by the similarity of the dual representation of the Hanin norm and the definition of the Monge-Kantorovich norm, we prove the following theorem, showing that analogously to the Hanin norm, the Monge-Kantorovich norm can be seen as an extension of the Kantorovich-Rubinstein norm from the subspace $cabv(X,0)$ to the whole space $cabv(X)$, leading to a number of consequences.

\begin{theorem*} \label{theorem}
The Monge-Kantorovich norm has the primal representation
\begin{equation}
\Vert \mu \Vert_{MK} = \inf_{\nu \in cabv(X,0)}\left\{ \max\left\{ \Vert \nu \Vert_{KR} , \Vert \mu - \nu \Vert_{TV} \right\} \right\},
\end{equation}
the supremum in the dual representation is achieved as $\exists f \in Lip(X) : \Vert f \Vert_{sum} = 1 \land \Vert \mu \Vert_{MK} = \int f d\mu$, the subset of measures in $cabv(X)$ with finite support is dense in $(cabv(X),\Vert . \Vert_{MK})$, the topological dual of $(cabv(X),\Vert . \Vert_{MK})$ and $(Lip(X), \Vert . \Vert_{sum})$ are isometrically isomorphic, and the norms $\Vert.\Vert_{MK}$ and $\Vert.\Vert_H$ are equivalent as $\Vert \mu \Vert_{MK} \leq \Vert \mu \Vert_H \leq 2 \Vert \mu \Vert_{MK}$ for $\forall \mu \in cabv(X)$.
\end{theorem*}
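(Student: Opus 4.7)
The plan is to first establish the primal representation, from which the remaining claims follow relatively directly. Set $p(\mu) := \inf_{\nu\in cabv(X,0)}\max\{\Vert\nu\Vert_{KR},\Vert\mu-\nu\Vert_{TV}\}$. The inequality $\Vert\mu\Vert_{MK}\leq p(\mu)$ is a direct computation: for any $\nu\in cabv(X,0)$ and $f\in Lip(X)$ with $\Vert f\Vert_{sum}\leq 1$, splitting $\int f\,d\mu=\int(f-f(x_0))\,d\nu+\int f\,d(\mu-\nu)$ (valid since $\nu(X)=0$) and bounding by the known dual pairings gives $\int f\,d\mu\leq\Vert f\Vert_L\Vert\nu\Vert_{KR}+\Vert f\Vert_\infty\Vert\mu-\nu\Vert_{TV}\leq(\Vert f\Vert_L+\Vert f\Vert_\infty)\max\{\Vert\nu\Vert_{KR},\Vert\mu-\nu\Vert_{TV}\}$.

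For the reverse inequality $p(\mu)\leq\Vert\mu\Vert_{MK}$, the plan is to identify the unit ball of $p$ with the Minkowski sum $B_{KR}+B_{TV}$ of the respective unit balls and apply Hahn-Banach separation in the weak-$*$ topology on $cabv(X)=C(X)^*$. I first argue that $B_{KR}+B_{TV}$ is weak-$*$ closed: $B_{TV}$ is weak-$*$ compact by Banach-Alaoglu; $B_{KR}$ is weak-$*$ closed because $cabv(X,0)$ is ($\nu\mapsto\nu(X)$ is weak-$*$ continuous as evaluation at the constant function $1\in C(X)$) and $\Vert\cdot\Vert_{KR}$ is weak-$*$ lower semi-continuous as a supremum of weak-$*$ continuous linear functionals indexed by $f\in Lip(X,x_0)\subset C(X)$; and in any topological vector space, the sum of a closed set and a compact set is closed. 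Given $\mu_0$ with $p(\mu_0)>\Vert\mu_0\Vert_{MK}$, pick $\lambda$ strictly between them so that $\mu_0/\lambda\notin B_{KR}+B_{TV}$, and separate by Hahn-Banach: there is $f\in C(X)$ with $\int f\,d(\mu_0/\lambda)>\sup_{\alpha\in B_{KR}+B_{TV}}\int f\,d\alpha$, and the right-hand side decomposes as $\sup_{B_{KR}}\int f\,d\nu+\sup_{B_{TV}}\int f\,d\rho=\Vert f\Vert_L+\Vert f\Vert_\infty=\Vert f\Vert_{sum}$ by the known dualities, with finiteness of the supremum forcing $f\in Lip(X)$. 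This yields $\Vert\mu_0\Vert_{MK}>\lambda$, contradicting the choice of $\lambda$.

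From the primal representation, the remaining claims follow quickly. The supremum in the dual representation is attained because $\{f\in Lip(X):\Vert f\Vert_{sum}\leq 1\}$ is equicontinuous and uniformly bounded, hence compact in $C(X)$ by Arzel\`a-Ascoli, and $f\mapsto\int f\,d\mu$ is sup-norm continuous. For density of finitely supported measures, cover $X$ by finitely many Borel sets $A_i$ of diameter $\leq\epsilon$, pick $x_i\in A_i$, set $\mu_\epsilon:=\sum_i\mu(A_i)\delta_{x_i}$, and observe $\Vert\mu-\mu_\epsilon\Vert_{MK}\leq\epsilon\Vert\mu\Vert_{TV}\to 0$ using the elementary bound $|f(x)-f(x_i)|\leq\Vert f\Vert_L\epsilon$ on $A_i$. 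For the dual isomorphism, the map $f\mapsto[\mu\mapsto\int f\,d\mu]$ embeds $Lip(X)$ isometrically into $(cabv(X),\Vert\cdot\Vert_{MK})^*$; for surjectivity and the reverse isometry, given a continuous $\phi$, set $f(x):=\phi(\delta_x)$, bound $\Vert f\Vert_L+\Vert f\Vert_\infty\leq\Vert\phi\Vert$ by testing on probes of the form $a(\delta_x-\delta_y)\pm b\delta_z$ with $ad(x,y)=b$ (whose MK-norm is at most $d(x,y)$ by the primal representation), and recover $\phi(\mu)=\int f\,d\mu$ on general $\mu$ by density. Finally, $\Vert\mu\Vert_{MK}\leq\Vert\mu\Vert_H\leq 2\Vert\mu\Vert_{MK}$ is immediate from the two primal representations and $\max(a,b)\leq a+b\leq 2\max(a,b)$.

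The principal obstacle is the primal-to-dual step: the Minkowski sum of two weak-$*$ closed convex sets need not be weak-$*$ closed, so the weak-$*$ compactness of $B_{TV}$ is essential, and the Hahn-Banach separator, \emph{a priori} only in $C(X)$, must be verified to lie in $Lip(X)$ -- this follows from the finiteness of the separation value, since otherwise $\sup_{\nu\in B_{KR}}\int f\,d\nu=+\infty$, as seen by testing on measures $(1/d(x,y))(\delta_x-\delta_y)$ that lie in $B_{KR}$.
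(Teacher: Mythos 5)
Your proof is correct, but it reaches the primal representation by a genuinely different route than the paper. The paper computes the Fenchel conjugates of $\nu_1 \mapsto \lambda\Vert\nu_1\Vert_{KR}$ and $\nu_2 \mapsto (1-\lambda)\Vert\mu-\nu_2\Vert_{TV}$, assembles them into the conjugate of $H_\mu(\nu_1,\nu_2)=\max\{\Vert\nu_1\Vert_{KR},\Vert\mu-\nu_2\Vert_{TV}\}$, and invokes a strong duality theorem for $\nu\mapsto H_\mu(\nu,\nu)$; the price is verifying a quasi-continuity regularity condition (via weak compactness of total-variation balls and a generic-continuity result for lower semicontinuous maps), and the reward is that attainment of the dual supremum falls out of the same theorem. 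You instead identify the right-hand side as the Minkowski gauge of $B_{KR}+B_{TV}$, show this set is weak-$*$ closed in $cabv(X)\cong C(X)^*$ (closed plus compact is closed), and separate by Hahn--Banach, with the support function of the sum splitting as $\Vert f\Vert_L+\Vert f\Vert_\infty$ and finiteness of the separation value forcing the separator into $Lip(X)$. This is more elementary and self-contained, trading the general conjugate-duality machinery for the concrete identification of $cabv(X)$ as a dual space and Banach--Alaoglu; the cost is that you must (and do) supply attainment separately, via Arzel\`a--Ascoli. Your handling of the remaining claims is likewise more direct: the paper adapts cited proofs of the density and dual-isomorphism statements from the Hanin-norm setting, whereas you give the partition-into-$\epsilon$-small-sets estimate and the probe argument with $(\delta_x-\delta_y)\pm d(x,y)\delta_z$ explicitly. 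Two trivial loose ends to tidy: the Borel sets $A_i$ in the density argument should be taken pairwise disjoint (a partition subordinate to a finite cover by $\epsilon$-balls), and the theorem asserts a maximizer with $\Vert f\Vert_{sum}=1$ exactly, which requires the one-line rescaling of a maximizer with $0<\Vert f\Vert_{sum}<1$ (with $\mu=0$ treated separately), as the paper does.
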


To obtain the proof, we will apply techniques of convex analysis cited from \cite{Zalinescu2002}. We start with a number of propositions.

\begin{proposition}
Given $\lambda \in [0,1]$, the mapping $F_\lambda : (cabv(X,0)\Vert.\Vert_{KR}) \to \R$ defined as
\begin{equation}
F_\lambda(\nu) = \lambda \Vert \nu \Vert_{KR}
\end{equation}
is proper, convex and continuous, and its convex conjugate $F_\lambda^* :  (Lip(X,x_0),\Vert.\Vert_L) \to \overline{\R}$ is the indicator
\begin{equation}
F_\lambda^*(f) = i_{\{f \in Lip(X,x_0) : \Vert f \Vert_L \leq \lambda\}}(f).
\end{equation}
\end{proposition}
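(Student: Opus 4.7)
I would split this into two largely independent parts: the analytic properties of $F_\lambda$, and the computation of its conjugate.

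For properness, convexity, and continuity, everything follows from the fact that $\Vert \cdot \Vert_{KR}$ is a norm on $cabv(X,0)$ and $\lambda \geq 0$. Being a nonnegative scalar multiple of a norm, $F_\lambda$ is finite-valued everywhere (hence proper with full effective domain), subadditive and positively homogeneous (hence convex), and $\lambda$-Lipschitz with respect to $\Vert \cdot \Vert_{KR}$ by the reverse triangle inequality (hence continuous). These are one-line verifications that I would dispatch immediately.

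For the conjugate, I would invoke the standard convex-analytic identity (cf.\ \cite{Zalinescu2002}) that the Fenchel conjugate of $\lambda \Vert \cdot \Vert$ on a normed space equals the indicator of the closed $\lambda$-ball of the continuous dual norm, then specialize this to the isometric isomorphism between the topological dual of $(cabv(X,0), \Vert \cdot \Vert_{KR})$ and $(Lip(X,x_0), \Vert \cdot \Vert_L)$ recalled in the introduction. To keep the argument self-contained, I would verify the two cases directly: for $\Vert f \Vert_L \leq \lambda$ the duality bound $\int f \, d\nu \leq \Vert f \Vert_L \Vert \nu \Vert_{KR}$ yields $\int f \, d\nu - \lambda \Vert \nu \Vert_{KR} \leq 0$ for every $\nu$, with equality attained at $\nu = 0$, so $F_\lambda^*(f) = 0$; for $\Vert f \Vert_L > \lambda$ one invokes the dual representation $\Vert f \Vert_L = \sup_{\Vert \nu \Vert_{KR} \leq 1} \int f \, d\nu$ to select $\nu$ with $\int f \, d\nu > \lambda \Vert \nu \Vert_{KR}$, and then sends $t \to \infty$ in $\int f \, d(t\nu) - \lambda \Vert t\nu \Vert_{KR}$ to drive $F_\lambda^*(f)$ to $+\infty$.

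I do not anticipate any substantive obstacle; the whole proposition is bookkeeping once the dual pairing is in place. The only mild wrinkle is the degenerate case $\lambda = 0$, where the closed ball $\{\Vert f \Vert_L \leq 0\}$ collapses to a single point; here one uses that $\Vert \cdot \Vert_L$ is a genuine norm (not merely a seminorm) on $Lip(X,x_0)$ thanks to the anchoring $f(x_0) = 0$, so the collapsed ball equals $\{0\}$ and the stated formula remains consistent with $F_0 \equiv 0$.
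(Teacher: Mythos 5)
Your proof is correct and follows essentially the same route as the paper, which likewise identifies the conjugate of $\lambda\Vert\cdot\Vert_{KR}$ with the indicator of the $\lambda$-ball of the dual norm via the isometric duality between $(cabv(X,0),\Vert\cdot\Vert_{KR})^*$ and $(Lip(X,x_0),\Vert\cdot\Vert_L)$, citing \cite[Corollary~2.4.16]{Zalinescu2002} together with the scaling rule \cite[Theorem~2.3.1(v)]{Zalinescu2002} where you instead verify the two cases by hand. Your explicit treatment of the endpoint $\lambda=0$ is a small improvement, since the paper's appeal to the scaling rule with the factor $\lambda^{-1}$ does not literally cover that case.
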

\begin{proof}
By \cite[Corollary~2.4.16]{Zalinescu2002},
\begin{equation}
(\nu \to \Vert \nu \Vert_{KR})^* = (f \to i_{\{f \in Lip(X,x_0) : \Vert f \Vert_L \leq 1\}}(f)).
\end{equation}
By \cite[Theorem~2.3.1(v)]{Zalinescu2002},
\begin{equation}
(\nu \to \lambda \Vert \nu \Vert_{KR})^* = (f \to \lambda i_{\{f \in Lip(X,x_0) : \Vert f \Vert_L \leq 1\}}(\lambda^{-1}f)),
\end{equation}
which is equivalent to the proposed conjugate relation. The mapping is clearly proper, convex and continuous by being the constant multiple of a norm with a positive multiplier.
\end{proof}

\begin{proposition}
Given $\lambda \in [0,1]$ and $\mu \in cabv(X)$, the mapping $G_{\lambda,\mu} : (cabv(X),\Vert.\Vert_H) \to \R$ defined as
\begin{equation}
G_{\lambda,\mu}(\nu) = (1-\lambda) \Vert \mu - \nu \Vert_{TV}
\end{equation}
is proper, convex and lower semicontinuous, and its convex conjugate $G_{\lambda,\mu}^* :  (Lip(X),\Vert.\Vert_{max}) \to \overline{\R}$ is
\begin{equation}
G_\lambda^*(f) = i_{\{f \in Lip(X) : \Vert f \Vert_\infty \leq 1-\lambda\}}(f) - \int f d\mu.
\end{equation}
\end{proposition}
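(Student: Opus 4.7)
The plan is to verify properness, convexity, and lower semicontinuity of $G_{\lambda,\mu}$ directly, and then compute the conjugate by translating the optimization variable so that $\Vert \cdot \Vert_{TV}$ is centered at zero, reducing the problem to the conjugate of a nonnegative multiple of the total variation norm under the duality $\langle f,\nu\rangle=\int f d\nu$ between $(cabv(X),\Vert \cdot \Vert_H)$ and $(Lip(X),\Vert \cdot \Vert_{max})$.

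Properness and convexity are immediate: $G_{\lambda,\mu}$ is nonnegative and vanishes at $\nu=\mu$, and is the constant $(1-\lambda)\geq 0$ times a norm shifted by $\mu$, hence convex. The subtle point is lower semicontinuity with respect to $\Vert \cdot \Vert_H$, since $\Vert \cdot \Vert_H\leq\Vert \cdot \Vert_{TV}$ makes the Hanin topology strictly coarser than the TV topology. To handle this I would realize $\Vert \cdot \Vert_{TV}$ as a pointwise supremum of $\Vert \cdot \Vert_H$-continuous linear functionals: by the classical duality on the compact metric space $X$, $\Vert \eta \Vert_{TV}=\sup\{\int f d\eta : f \in C(X),\,\Vert f \Vert_\infty\leq 1\}$, and a Moreau--Yosida-type regularization such as $f_n(x)=\inf_{y \in X}\{f(y)+nd(x,y)\}$ produces Lipschitz functions with $\Vert f_n \Vert_\infty\leq 1$ converging uniformly to $f$ (using the compactness of $X$), so the supremum may be restricted to $\{f \in Lip(X) : \Vert f \Vert_\infty\leq 1\}$. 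Each such $f$ has finite $\Vert f \Vert_{max}$, hence $\eta \mapsto \int f d\eta$ is continuous on $(cabv(X),\Vert \cdot \Vert_H)$, so $\Vert \cdot \Vert_{TV}$ is $\Vert \cdot \Vert_H$-lsc as a supremum of continuous functions, and translation by $\mu$ and scaling by $1-\lambda\geq 0$ preserve lsc.

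For the conjugate, I would perform the substitution $\rho=\mu-\nu$ to obtain
\begin{equation}
G_{\lambda,\mu}^*(f)=\int f d\mu+\sup_{\rho \in cabv(X)}\left\{\int(-f)d\rho-(1-\lambda)\Vert \rho \Vert_{TV}\right\}.
\end{equation}
The inner supremum is the convex conjugate of $\rho \mapsto (1-\lambda)\Vert \rho \Vert_{TV}$ evaluated at $-f$. Using the scaling rule \cite[Theorem~2.3.1(v)]{Zalinescu2002} in the style of Proposition~1, together with the fact that, under the above dual pairing, the conjugate of $\Vert \cdot \Vert_{TV}$ is the indicator of $\{f \in Lip(X) : \Vert f \Vert_\infty\leq 1\}$ --- the sup being attained by scaling a signed Dirac at an argmax of $|f|$, which exists by continuity of $f$ on the compact $X$ --- and the symmetry $\Vert {-f} \Vert_\infty=\Vert f \Vert_\infty$, one recovers the indicator factor appearing in the statement.

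The main obstacle is the lower semicontinuity step: everything else reduces to standard convex-conjugate manipulations, but the passage from $C(X)$ to $Lip(X)$ in the variational formula for $\Vert \cdot \Vert_{TV}$ depends essentially on the compactness of $X$ and on a regularization that preserves the sup-norm bound, so it is the place where the structural hypothesis on $X$ is genuinely used.
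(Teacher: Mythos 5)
Your proof is correct, but it takes a genuinely different route from the paper's on the one nontrivial point, namely lower semicontinuity of $\nu \mapsto \Vert \mu - \nu \Vert_{TV}$ with respect to $\Vert \cdot \Vert_H$. The paper gets this from compactness: sublevel sets of $\Vert \cdot \Vert_{TV}$ are compact in the topology of weak convergence of measures, hence compact and therefore closed in $(cabv(X),\Vert\cdot\Vert_H)$, which is exactly lower semicontinuity. You instead write $\Vert \eta \Vert_{TV} = \sup\{\int f\,d\eta : f \in Lip(X),\ \Vert f \Vert_\infty \leq 1\}$ by combining the Riesz duality with $C(X)$ and the inf-convolution regularization $f_n(x)=\inf_y\{f(y)+nd(x,y)\}$ (which preserves the sup-norm bound and converges uniformly on the compact $X$), so that $\Vert\cdot\Vert_{TV}$ is a pointwise supremum of $\Vert\cdot\Vert_H$-continuous linear functionals and hence lsc. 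Your argument is more self-contained --- it avoids the weak-compactness theorems the paper imports from \cite{Cobzasetal2019} --- and it yields convexity and lsc simultaneously, in fact lsc with respect to the weak topology of the pairing with $Lip(X)$, which is the natural notion for the conjugacy arguments that follow; the paper's route is shorter on the page only because it can cite the compactness results it already uses elsewhere. For the conjugate itself the two proofs are essentially the same in substance: your direct substitution $\rho=\mu-\nu$ followed by evaluating $((1-\lambda)\Vert\cdot\Vert_{TV})^*$ at $-f$ replaces the paper's chain of transformation rules from \cite{Zalinescu2002}, and both reduce to the same computation with signed Dirac masses at a maximizer of $\vert f\vert$ to identify the conjugate of $\Vert\cdot\Vert_{TV}$ as the indicator of $\{f : \Vert f\Vert_\infty\leq 1\}$. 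The only point worth flagging is the degenerate case $\lambda=1$, where your inner supremum is $i_{\{0\}}(f)=i_{\{\Vert f\Vert_\infty\leq 0\}}(f)$ rather than a rescaled indicator; this is consistent with the stated formula and is no worse than the paper's own use of $(1-\lambda)^{-1}$ there.
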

\begin{proof}
By \cite[Theorem~8.4.10]{Cobzasetal2019}, the level sets of the mapping $(\nu \to \Vert \nu \Vert_{TV})$ are compact with respect to topology of the weak convergence of measures, hence compact with respect to the topology induced by $\Vert.\Vert_H$ as well by \cite[Theorem~8.5.7]{Cobzasetal2019}. This implies that the level sets are closed in $(cabv(X),\Vert.\Vert_H)$, hence the mapping is lower semicontinuous, and clearly proper and convex as well. It is also sublinear, so that by \cite[Theorem~2.4.14(i)]{Zalinescu2002} one has the conjugate relation
\begin{equation}
(\nu \to \Vert \nu \Vert_{TV})^* =
i_{\partial (\nu \to \Vert \nu \Vert_{TV})(0)},
\end{equation}
where by definition the subdifferential at $0$ is
\begin{equation}
\partial (\nu \to \Vert \nu \Vert_{TV})(0) =
\{f \in Lip(X) \ \vert \ \forall \mu \in cabv(X) : \int f d\mu \leq \Vert \mu \Vert_{TV} \}.
\end{equation}
Since $X$ is compact, any $f \in Lip(X)$ achieves its minimum and maximum, hence
\begin{equation}
\exists x_0 \in X : \vert f(x_0) \vert = \Vert f \Vert_\infty,
\end{equation}
implying that 
\begin{equation}
\max\left\{ \int f d\mu : \mu \in cabv(X), \Vert \mu \Vert_{TV} = \xi \right\} = \int f d(\pm \xi \delta_{x_0}) = \xi \Vert f \Vert_\infty
\end{equation}
with $\delta_{x_0}$ being the Dirac measure at $x_0$, and the sign of $\xi$ is opposite to that of $f(x_0)$. It follows that
\begin{equation}
\exists \mu \in cabv(X), \Vert \mu \Vert_{TV} = \xi : \int f d\mu > \Vert \mu \Vert_{TV} \iff 
\Vert f \Vert_\infty > 1,
\end{equation}
which is true for any $\xi \geq 0$, implying that
\begin{equation}
\partial (\nu \to \Vert \nu \Vert_{TV})(0) = \{f \in Lip(X) : \Vert f \Vert_\infty \leq 1 \},
\end{equation}
leading to the conjugate relation
\begin{equation}
(\nu \to \Vert \nu \Vert_{TV})^* = i_{\{f \in Lip(X) : \Vert f \Vert_\infty \leq 1\}}.
\end{equation}
By \cite[Theorem~2.3.1(v)]{Zalinescu2002},
\begin{equation}
(\nu \to \Vert -\nu \Vert_{TV})^* = (f \to i_{\{f \in Lip(X) : \Vert f \Vert_\infty \leq 1\}}(-f)),
\end{equation}
where $(-f)$ can clearly be replaced by $(f)$.
By \cite[Theorem~2.3.1(vi)]{Zalinescu2002}, 
\begin{equation}
(\nu \to \Vert \mu-\nu \Vert_{TV})^* = \left(f \to i_{\{f \in Lip(X) : \Vert f \Vert_\infty \leq 1\}}(f) - \int f d\mu\right).
\end{equation}
By \cite[Theorem~2.3.1(v)]{Zalinescu2002}, 
\begin{multline}
(\nu \to (1-\lambda)\Vert \mu-\nu \Vert_{TV})^* = \left(f \to (1-\lambda)i_{\{f \in Lip(X) : \Vert f \Vert_\infty \leq 1\}}((1-\lambda)^{-1}f) 
\right.\vphantom{\int}\\\left.
- (1-\lambda)\int (1-\lambda)^{-1}f d\mu\right),
\end{multline}
which is clearly equivalent to the proposition.
\end{proof}

\begin{proposition}
Given $\mu \in cabv(X)$, the mapping $H_\mu : (cabv(X,0),\Vert.\Vert_{KR}) \times (cabv(X),\Vert.\Vert_H) \to \R$ defined as
\begin{equation}
H_\mu(\nu_1,\nu_2) = \max\{\Vert \nu_1 \Vert_{KR}, \Vert \mu - \nu_2 \Vert_{TV}\}
\end{equation}
is proper, convex and lower semicontinuous, and its convex conjugate $H_\mu^* :  (Lip(X,x_0),\Vert.\Vert_L) \times (Lip(X),\Vert.\Vert_{max}) \to \overline{\R}$ is
\begin{multline}
H_\mu^*(f_1,f_2) = \min_{\lambda \in [0,1]}\left\{ i_{\{f \in Lip(X,x_0) : \Vert f \Vert_L \leq \lambda\}}(f_1) 
\vphantom{\int}\right.\\\left.
+ i_{\{f \in Lip(X) : \Vert f \Vert_\infty \leq 1-\lambda\}}(f_2) - \int f_2 d\mu \right\}.
\end{multline}
\end{proposition}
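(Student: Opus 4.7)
The plan is to write the maximum as a supremum over a convex-combination parameter, expand the conjugate by definition, and then commute the resulting $\sup$ and $\inf$ via a minimax theorem; the decoupled additive structure in $(\nu_1,\nu_2)$ will then reduce the problem to the two preceding propositions.

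First, I would verify that $H_\mu$ is proper, convex and lower semicontinuous: the component $\nu_1\mapsto\Vert\nu_1\Vert_{KR}$ is a norm (continuous and convex), while by Proposition~2 the component $\nu_2\mapsto\Vert\mu-\nu_2\Vert_{TV}$ is proper, convex and l.s.c.\ on $(cabv(X),\Vert\cdot\Vert_H)$, and these properties are preserved by the pointwise maximum on the product space. Then, using the elementary identity $\max\{a,b\}=\sup_{\lambda\in[0,1]}\{\lambda a+(1-\lambda)b\}$, I would rewrite
\begin{equation*}
H_\mu(\nu_1,\nu_2)=\sup_{\lambda\in[0,1]}\{F_\lambda(\nu_1)+G_{\lambda,\mu}(\nu_2)\},
\end{equation*}
so that by the definition of the conjugate
\begin{equation*}
H_\mu^*(f_1,f_2)=\sup_{\nu_1,\nu_2}\inf_{\lambda\in[0,1]}\left\{\int f_1\,d\nu_1+\int f_2\,d\nu_2-F_\lambda(\nu_1)-G_{\lambda,\mu}(\nu_2)\right\}.
\end{equation*}

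The crux is to commute this $\sup$ and $\inf$ via Sion's minimax theorem. The integrand is affine in $\lambda$ on the compact convex set $[0,1]$ (hence continuous and quasi-linear), and for each fixed $\lambda$ it is concave in $(\nu_1,\nu_2)$ and upper semicontinuous, being a continuous linear functional minus non-negative multiples of the continuous norm $\Vert\cdot\Vert_{KR}$ and of the l.s.c.\ mapping $\Vert\mu-\cdot\Vert_{TV}$. Once the $\sup$ and $\inf$ are swapped, the inner supremum decouples across $\nu_1$ and $\nu_2$ and yields $F_\lambda^*(f_1)+G_{\lambda,\mu}^*(f_2)$; substituting the explicit forms from Propositions~1 and~2 produces the claimed expression.

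Finally, to justify writing $\min$ rather than $\inf$, I would observe that the infimand equals $-\int f_2\,d\mu$ on the interval $[\Vert f_1\Vert_L,\,1-\Vert f_2\Vert_\infty]$ whenever this is non-empty and is $+\infty$ otherwise, so in the finite case the infimum is indeed attained. The principal obstacle is the careful verification of Sion's hypotheses, in particular joint upper semicontinuity in $(\nu_1,\nu_2)$ in the product topology induced by $\Vert\cdot\Vert_{KR}$ and $\Vert\cdot\Vert_H$; once this is established, everything else is bookkeeping assembled from the two preceding propositions.
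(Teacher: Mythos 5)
Your argument is correct, but it takes a genuinely different route to the conjugate formula than the paper does. The paper simply invokes \cite[Corollary~2.8.12]{Zalinescu2002}, which gives the conjugate of a pointwise maximum of proper convex functions directly as $\min_{\lambda\in[0,1]}\{F_\lambda^*+G_{\lambda,\mu}^*\}$, and then substitutes Propositions~1 and~2; you instead re-derive that corollary from scratch by writing $\max\{a,b\}=\sup_{\lambda\in[0,1]}\{\lambda a+(1-\lambda)b\}$ and commuting $\sup$ and $\inf$ via Sion's minimax theorem. Your verification of Sion's hypotheses is the right checklist and goes through: $[0,1]$ is compact convex and the integrand is affine (hence l.s.c.\ and quasi-convex) in $\lambda$, while for fixed $\lambda$ it is concave and jointly u.s.c.\ in $(\nu_1,\nu_2)$ because $f_1$ and $f_2$ act as continuous linear functionals on $(cabv(X,0),\Vert\cdot\Vert_{KR})$ and $(cabv(X),\Vert\cdot\Vert_H)$ respectively (these are exactly the dual pairings), and the subtracted terms are non-negative multiples of a continuous norm and of the l.s.c.\ map from Proposition~2; the decoupling of the inner supremum and the well-definedness of the sum (neither partial supremum is $-\infty$) are also fine. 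What your approach buys is self-containedness and an explicit reason why the outer infimum is attained (your interval $[\Vert f_1\Vert_L,\,1-\Vert f_2\Vert_\infty]$ observation), whereas the paper gets attainment packaged inside the cited corollary; what the paper's approach buys is brevity and avoidance of the minimax machinery. The treatment of properness, convexity and lower semicontinuity is essentially identical in both. The only point worth making explicit in a final write-up is that Sion's theorem is stated for real-valued functions, which is satisfied here since all the norms involved are finite on their domains.
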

\begin{proof}
By \cite[Corollary~2.8.12]{Zalinescu2002}, the conjugate relation
\begin{multline}
((\nu_1,\nu_2) \to \max\{\lambda^{-1}F_\lambda(\nu_1), (1-\lambda)^{-1}G_{\lambda,\mu}(\nu_2)\})^* \\= ((f_1,f_2) \to \min_{\lambda \in [0,1]}\{ F_\lambda^*(f_1) + G_{\lambda,\mu}^*(f_2) \})
\end{multline}
holds, which together with the previous propositions gives the claimed conjugate relation. By \cite[Theorem~2.1.3(vii)]{Zalinescu2002}, $H_\mu$ is proper and convex. Clearly the mappings $(((\nu_1,\nu_2)) \to \lambda^{-1}F_\lambda(\nu_1))$ and $(((\nu_1,\nu_2)) \to (1-\lambda)^{-1}G_{\lambda,\mu}(\nu_2))$ are lower semicontinuous, hence $H_\mu$ as well by being their pointwise maximum.
\end{proof}

\begin{proposition}
The mapping $(\nu \to \nu) : (cabv(X,0),\Vert.\Vert_{KR} \to (cabv(X),\Vert.\Vert_H)$ is linear and continuous, and its adjoint $(\nu \to \nu)^*$ is $(f \to f - f(x_0)) : (Lip(X),\Vert.\Vert_{max}) : (Lip(X,x_0),\Vert.\Vert_L)$.
\end{proposition}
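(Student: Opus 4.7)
The plan is straightforward: establish continuity directly from the infimum definition of the Hanin norm, then identify the adjoint via the two isometric isomorphisms between the duals and the Lipschitz function spaces recalled in the introduction.

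Linearity of $\iota : (cabv(X,0),\Vert.\Vert_{KR}) \to (cabv(X),\Vert.\Vert_H)$, $\nu \mapsto \nu$, is immediate as the inclusion of a subspace. For continuity, for any $\nu \in cabv(X,0)$ the choice $\nu' = \nu$ in the infimum defining $\Vert \nu \Vert_H$ yields $\Vert \nu \Vert_H \leq \Vert \nu \Vert_{KR} + \Vert \nu - \nu \Vert_{TV} = \Vert \nu \Vert_{KR}$, so $\iota$ is bounded with operator norm at most $1$.

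For the adjoint, I would invoke the cited isometric isomorphisms: every continuous linear functional on $(cabv(X,0),\Vert.\Vert_{KR})$ has the form $\nu \mapsto \int g\, d\nu$ for a unique $g \in Lip(X,x_0)$, and every continuous linear functional on $(cabv(X),\Vert.\Vert_H)$ has the form $\mu \mapsto \int f\, d\mu$ for a unique $f \in Lip(X)$. Under these identifications, $\iota^* f$ is the unique $g \in Lip(X,x_0)$ satisfying $\int g\, d\nu = \int f\, d\nu$ for every $\nu \in cabv(X,0)$. The candidate $g = f - f(x_0)$ lies in $Lip(X,x_0)$ with $\Vert g \Vert_L = \Vert f \Vert_L$ and satisfies this equation, since $\int f(x_0)\, d\nu = f(x_0)\nu(X) = 0$ whenever $\nu(X) = 0$; uniqueness of the representative (built into the isomorphism) then forces $\iota^* f = f - f(x_0)$. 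There is no real obstacle here — the result reduces to the defining property of the adjoint together with the two dual identifications, the only subtle ingredient being the fact that the Hanin norm dominates the total-variation contribution to zero out when evaluated on a zero-charge measure, which is exactly what the choice $\nu' = \nu$ exploits.
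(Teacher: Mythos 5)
Your proposal is correct and follows essentially the same route as the paper: continuity via the bound $\Vert \nu \Vert_H \leq \Vert \nu \Vert_{KR}$ (which you justify more explicitly by taking $\nu' = \nu$ in the infimum defining the Hanin norm), and the adjoint via the computation $\int (f - f(x_0))\, d\nu = \int f\, d\nu - f(x_0)\nu(X) = \int f\, d\nu$ for zero-charge $\nu$. Your additional remarks on the dual identifications and uniqueness of the representative only make explicit what the paper leaves implicit.
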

\begin{proof}
It is clear that $\Vert \nu \Vert_H \leq \Vert \nu \Vert_{KR}$, so the linear operator $(\nu \to \nu)$ is bounded, hence continuous. For $\forall \nu \in cabv(X,0), f \in Lip(X)$ it holds that $\int (f-f(x_0)) d\nu = \int f d\nu - f(x_0) \nu(X) = \int f d\nu$, proving the adjoint relation.
\end{proof}

We are now ready to prove the theorem.

\begin{proof}[Proof of Theorem~\ref{theorem}]
First we show that the condition \cite[Theorem~2.8.1(iii)]{Zalinescu2002} holds for the mapping $H_\mu$. Since $H_\mu$ is finite everywhere, one has $\dom H_\mu = cabv(X,0) \times cabv(X)$. For any $\nu_1 \in cabv(X,0)$, the restriction of the mapping $H_\mu(\nu_1,\cdot)$ to any closed subset is lower semicontinuous. For any $r>0$, the closed and nonempty subset $\{ \nu \in cabv(X) : \Vert \nu \Vert_{TV} \leq r \}$ is complete with respect to any metric metrizing the topology of the weak convergence of measures by \cite[Theorem~8.4.10]{Cobzasetal2019}, such as the metric induced by $\Vert.\Vert_H$ by \cite[Theorem~8.5.7]{Cobzasetal2019}. Hence the restriction of the mapping $H_\mu(\nu_1,\cdot)$ to this subset is lower semicontinuous, and therefore has points of continuity by \cite[Theorem~1.1]{Sietal2020}, so that $\exists \nu_2 \in cabv(X)$ such that $H_\mu(\nu_1,\cdot)$ is continuous at $\nu_2$.

Since the condition \cite[Theorem~2.8.1(iii)]{Zalinescu2002} is satisfied, by \cite[Corollary~2.8.2]{Zalinescu2002}, one has
\begin{equation}
\inf_{\nu \in cabv(X,0)}\{ H_\mu((\nu,\nu)) \} = \max_{f \in Lip(X)}\{ -H_\mu^*((-(f-f(x_0)),f)) \},
\end{equation}
or equivalently
\begin{multline}
\inf_{\nu \in cabv(X,0)}\{ \max\{\Vert \nu \Vert_{KR}, \Vert \mu - \nu \Vert_{TV}\} \} \\= \max_{f \in Lip(X)}\left\{ -\min_{\lambda \in [0,1]}\left\{ i_{\{f \in Lip(X,x_0) : \Vert f \Vert_L \leq \lambda\}}(-(f-f(x_0))) 
\right.\right.\\\left.\left.\vphantom{\int}
+ i_{\{f \in Lip(X) : \Vert f \Vert_\infty \leq 1-\lambda\}}(f) - \int f d\mu \right\} \right\},
\end{multline}
where, since $\Vert -(f-f(x_0)) \Vert_L = \Vert f \Vert_L$ and $\min_{x}\{g(x)\}=-\max_{x}\{-g(x)\}$, the right side further simplifies to
\begin{multline}
\max_{f \in Lip(X)}\left\{ \max_{\lambda \in [0,1]}\left\{ \int f d\mu - i_{\{f \in Lip(X,x_0) : \Vert f \Vert_L \leq \lambda\}}(f)
\right.\right.\\\left.\left.\vphantom{\int}
- i_{\{f \in Lip(X) : \Vert f \Vert_\infty \leq 1-\lambda\}}(f) \right\} \right\}.
\end{multline}
Given $f \in Lip(X)$, it is clear that
\begin{multline}
\max_{\lambda \in [0,1]}\left\{ \int f d\mu - i_{\{f \in Lip(X,x_0) : \Vert f \Vert_L \leq \lambda\}}(f)
- i_{\{f \in Lip(X) : \Vert f \Vert_\infty \leq 1-\lambda\}}(f) \right\} \\
= \begin{cases}
\int f d\mu \text{ if $\exists \lambda \in [0,1] : \Vert f \Vert_\infty \leq 1-\lambda\ \land \Vert f \Vert_L \leq \lambda$,}\\
-\infty \text{ otherwise.}
\end{cases}
\end{multline}
All we need to show now is that
\begin{equation}
\exists \lambda \in [0,1] : \Vert f \Vert_\infty \leq 1-\lambda\ \land \Vert f \Vert_L \leq \lambda
\iff \Vert f \Vert_{sum} \leq 1.
\end{equation}
If $\Vert f \Vert_\infty \leq 1-\lambda\ \land \Vert f \Vert_L \leq \lambda$, then $\Vert f \Vert_\infty + \Vert f \Vert_L \leq \lambda + (1-\lambda) = 1$. If $\Vert f \Vert_{sum} \leq 1$, then $\lambda = \Vert f \Vert_L$ suffices. Hence one has
\begin{equation}
\inf_{\nu \in cabv(X,0)}\{ \max\{\Vert \nu \Vert_{KR}, \Vert \mu - \nu \Vert_{TV}\} \} = \max_{f \in Lip(X), \Vert f \Vert_{sum} \leq 1}\left\{\int f d\mu \right\},
\end{equation}
which is exactly the primal formula we set out to prove, together with the variant of the dual formula with a maximum instead of a supremum, implying that $\exists f_* \in Lip(X) : \Vert f_* \Vert_{sum} \leq 1 \land \Vert \mu \Vert_{MK} = \int f_* d\mu$. If $\Vert f_* \Vert_{sum} < 1$ for such an $f_*$, then we get the contradiction $\int \Vert f_* \Vert_{sum}^{-1} f_* d\mu > \Vert \mu \Vert_{MK}$, so that one actually has $\Vert f_* \Vert_{sum} = 1$.

For the density claim, notice that the missing ingredient for the proof of \cite[Proposition~8.5.3]{Cobzasetal2019} to work with $\Vert . \Vert_{MK}$ instead of $\Vert . \Vert_H$ is the formula $\forall \nu \in cabv(X,0), \mu \in cabv(X) : \Vert \mu \Vert_{MK} \leq \Vert \nu \Vert_{KR} + \Vert \mu - \nu \Vert_{TV}$, which in light of the primal representation clearly holds.

For the duality claim, notice again that the missing ingredient for the proof of \cite[Proposition~8.5.5]{Cobzasetal2019} to work with $\Vert . \Vert_{MK}$ instead of $\Vert . \Vert_H$ is exactly the density claim we just proved, hence $(cabv(X),\Vert . \Vert_{MK})^* \cong (Lip(X), \Vert . \Vert_{sum})$ holds as well.

For the equivalence claim, notice that on one hand, one has $\Vert \nu \Vert_{KR} + \Vert \mu - \nu \Vert_{TV} \geq \max\{\Vert \nu \Vert_{KR}, \Vert \mu - \nu \Vert_{TV}\}$ for $\forall \nu \in cabv(X,0)$, hence $\Vert \mu \Vert_H \geq \Vert \mu \Vert_{MK}$ for $\forall \mu \in cabv(X)$. On the other hand, one has $2\max\{\Vert \nu \Vert_{KR}, \Vert \mu - \nu \Vert_{TV}\} \geq \Vert \nu \Vert_{KR} + \Vert \mu - \nu \Vert_{TV}$ for $\forall \nu \in cabv(X,0)$, hence $2\Vert \mu \Vert_{MK} \geq \Vert \mu \Vert_{H}$ for $\forall \mu \in cabv(X)$. Therefore the norms $\Vert . \Vert_{MK}$ and $\Vert . \Vert_H$ are equivalent.
\end{proof}

\bibliographystyle{apalike}
\bibliography{kantrub}

\end{document}